\newtheorem{prop}{Property}
\newtheorem{lem}{Lemma}
\theoremstyle{definition}
\newtheorem*{rem}{Remark}
\begin{document}
\title{The $f$-belos}

\author{
{\sc Antonio M. Oller-Marc\'{e}n}\\
{Centro Universitario de la Defensa}\\
{Academia General Militar}\\ {Ctra. de Huesca, s/n, 50090 Zaragoza, Spain}\\
{oller@unizar.es}}

\date{}

\maketitle

\begin{abstract}
The \emph{arbelos} is the shape bounded by three mutually tangent semicircles with collinear diameters. Recently, Sondow introduced the parabolic analog, the \emph{parbelos} and proved several properties of the parbelos similar to properties of the arbelos. In this paper we give one step further and generalize the situation considering the figure bounded by (quite) arbitrary similar curves, the \emph{$f$-belos}. We prove analog properties to those of the arbelos and parbelos and, moreover, we characterize the parbelos and the arbelos as the $f$-beloses satisfying certain conditions.
\end{abstract}

\section{Introduction}

The \emph{arbelos} ($\acute{\alpha}\rho\beta\eta\lambda o\varsigma$, literally ``shoemaker's knife'') was introduced in Proposition 4 of Archimedes' Book of Lemmas \cite[p. 304]{ARC}. It is the plane figure bounded by three pairwise tangent semicircles with diameters lying on the same line (see the left-hand side of Figure \ref{arbparb}). In addition to the properties proved by Archimedes himself, there is a long list of properties satisfied by this figure. Boas's paper \cite{BOA} presents some of them and is a good source for references.

It is quite surprising to discover that for 23 centuries no generalizations of this figure were introduced. Recently, Sondow \cite{SON} has extended the original construction considering latus rectum arcs of parabolas instead of semicircles (see right-hand side of Figure \ref{arbparb}). In his paper, Sondows proves several interesting properties of his construction (named \emph{parbelos}) that are, in some sense, counterparts of properties of the arbelos. 

\begin{figure}[h]
\label{arbparb}
\begin{center}
\includegraphics[width=\linewidth]{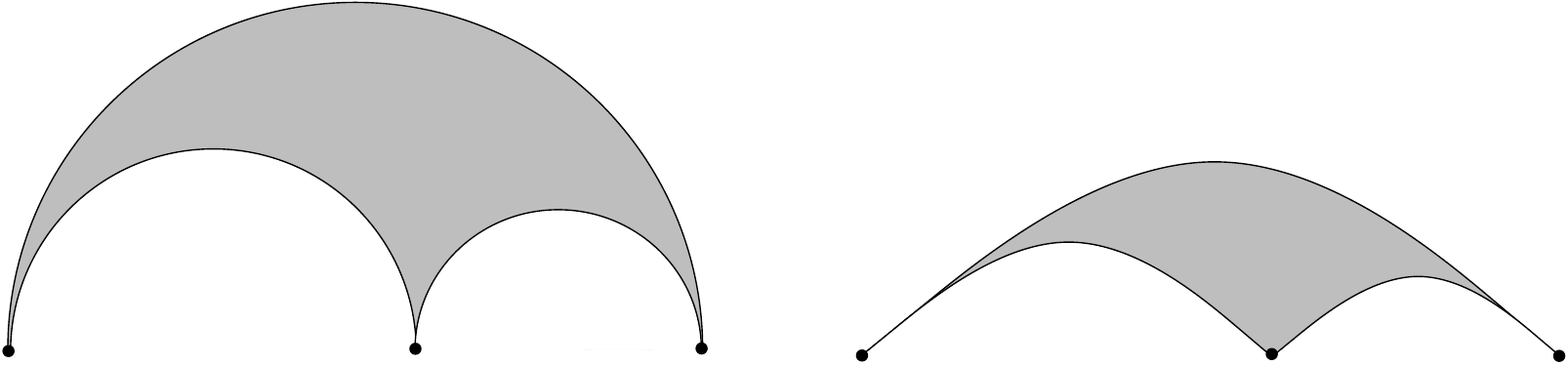}
\end{center}
\caption{An arbelos (left) and a parbelos (right).}
\end{figure}

The motivation for considering latus rectum arcs of parabolas instead of semicircles is clear. Just like all circles are similar, so to all parabolas are similar. Of course this is a very special property of these curves which is not shared even by other conics. Nevertheless, it gives the clue for a further generalization of both the arbelos and the parbelos. In a suitable generalization, the three considered arcs must be similar curves.

In Section 2 we present our construction and the resulting family of figures that we shall call \emph{$f$-belos}. Subsequent sections are mainly devoted to extend and give analogs of some the properties found in \cite{SON}. In passing we will see how arbelos and parbelos appear as particular cases of our construction imposing certain seemingly unrelated conditions.

\section{The $f$-belos}

Let $f:[0,1]\longrightarrow \mathbb{R}$ be a function such that $f(x)>0$ except for $f(0)=f(1)=0$. We will assume that $f$ is continuous in $[0,1]$ and differentiable in $(0,1)$. Given $p\in (0,1)$ we define functions $g:[0,p]\longrightarrow\mathbb{R}$ and $h:[p,1]\longrightarrow\mathbb{R}$ given by:
$$g(x)=pf(x/p),$$
$$h(x)=(1-p)f\left(\frac{x-p}{1-p}\right).$$
Observe that both $g$ and $h$ are similar to $f$ ($g$ is obtained by a homothety centered at the origin and $h$ is obtained by a homothety followed by a translation). In what follows we will consider the case when the graphs of $g$ and $h$ are below the graph of $f$ so that a situation like the one in Figure \ref{efe} makes sense.

\begin{figure}[h]
\label{efe}
\begin{center}
\includegraphics{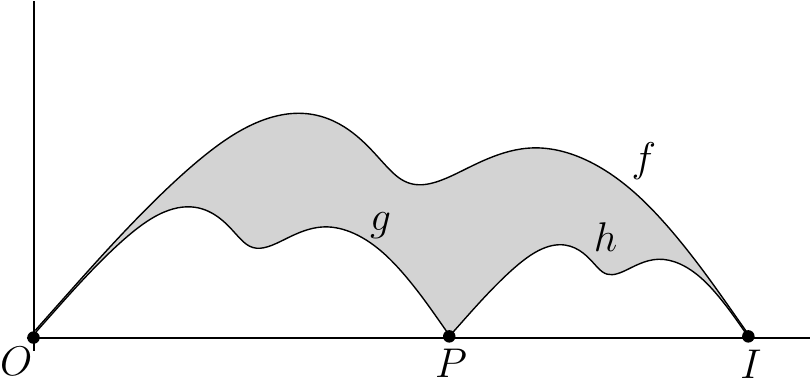}
\end{center}
\caption{The $f$-belos.}
\end{figure}

Given the function $f$ we will call the figure obtained in the previous construction an $f$-belos. The point $P=(p,0)$ will be called the cusp of the $f$-belos. We will denote $O=(0,0)$ and $I=(1,0)$ (see Figure \ref{efe}).

Observe that if $f(x)=\sqrt{x-x^2}$ we recover the original arbelos, while if $f(x)=x-x^2$ we obtain Sondow's parbelos.

\section{Elementary properties of the $f$-belos}

In spite of the generality of the latter construction, the $f$-belos satisfy several interesting properties which, in some sense, extend those of the arbelos and the parbelos. These properties are analogs of Properties 1 and 2 in \cite{SON}.

\begin{prop}
The upper and lower boundaries of an $f$-belos have the same length.
\end{prop}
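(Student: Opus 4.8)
The plan is to express each boundary as an arc-length integral and exploit the fact that $g$ and $h$ are similar copies of $f$ with ratios $p$ and $1-p$. The upper boundary is the graph of $f$ over $[0,1]$, whose length is
\[
L_f=\int_0^1\sqrt{1+f'(x)^2}\,dx,
\]
while the lower boundary is the union of the graph of $g$ over $[0,p]$ and the graph of $h$ over $[p,1]$, with lengths $L_g$ and $L_h$.

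First I would differentiate the defining formulas. Since $g(x)=pf(x/p)$ the chain rule gives $g'(x)=f'(x/p)$, and since $h(x)=(1-p)f\!\left(\frac{x-p}{1-p}\right)$ it gives $h'(x)=f'\!\left(\frac{x-p}{1-p}\right)$; the scaling factor cancels the derivative of the inner argument in each case. Next I would evaluate $L_g=\int_0^p\sqrt{1+f'(x/p)^2}\,dx$ by the substitution $u=x/p$, $dx=p\,du$, which turns the integral into $p\int_0^1\sqrt{1+f'(u)^2}\,du=pL_f$. The identical substitution $v=\frac{x-p}{1-p}$ applied to $L_h$ yields $L_h=(1-p)L_f$. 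Adding, $L_g+L_h=pL_f+(1-p)L_f=L_f$, which is exactly the claim. This is really just the statement, already noted in the construction, that $g$ and $h$ are obtained from $f$ by homotheties of ratios $p$ and $1-p$, together with the fact that arc length scales linearly under similarity.

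The one technical point to watch is that $f$ is only assumed differentiable on the open interval $(0,1)$, so $f'$ may be unbounded near the endpoints (indeed it is, in the arbelos case $f(x)=\sqrt{x-x^2}$), and the integral $L_f$ must be read as an improper integral. I expect this to be the only real obstacle, and it is mild: the change of variables above is a linear bijection carrying the endpoints to the endpoints, so it converts the improper integral for $L_g$ or $L_h$ into $p$ (resp.\ $1-p$) times the improper integral for $L_f$ regardless of whether that integral converges. Equivalently, one may simply invoke the invariance of arc length under similarities, which holds whether or not the common length is finite, and thereby sidestep any convergence discussion entirely.
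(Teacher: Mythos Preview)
Your proof is correct and follows essentially the same approach as the paper: both arguments rest on the fact that $g$ and $h$ are similar to $f$ with ratios $p$ and $1-p$, so their arc lengths are $pL_f$ and $(1-p)L_f$. The paper simply invokes this scaling in one sentence, whereas you make it explicit via the arc-length integral and the linear substitutions $u=x/p$ and $v=(x-p)/(1-p)$; your additional remark about the improper integral is a nice technical observation that the paper omits.
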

\begin{proof}
If $L_f$ is the length of the upper boundary, $L_g$ is the length of the  lower arc corresponding to the graph of $g$ and $L_h$ in the length of the lower arc corresponding to the graph of $h$ it follows by their similarity that $L_g=pL_f$ and $L_h=(1-p)L_f$. Hence, the result.
\end{proof}

The following lemma is easy to prove and it is closely related to Plato's analogy of the line \cite{BAL}.

\begin{lem}
Consider a segment $\overline{AB}$ and choose any point $C\in\overline{AB}$ except the endpoints. Now, let $D\in\overline{AC}$ and $E\in\overline{CB}$ be points such that
$$\frac{|AC|}{|CB|}=\frac{|AD|}{|DC|}=\frac{|CE|}{|EB|}.$$
Then, 
$$|DC|=|CE|=\dfrac{|AC|\cdot |CB|}{|AB|}.$$
\end{lem}

As a consequence we obtain the following property.

\begin{prop}
Under each lower arc of an $f$-belos, construct a new $f$-belos similar to the original. Of the four lower arcs, the middle two are congruent, and their common length equals one half the harmonic mean of the lengths of the original lower arcs.
\end{prop}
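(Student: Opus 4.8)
The plan is to reduce everything to arc-length bookkeeping and then invoke the preceding Lemma, which is tailor-made for the final identity. First I would record the lengths of the four lower arcs. By Property 1 applied to each of the two new $f$-beloses, the lower arcs of the $f$-belos built under the $g$-arc have lengths $pL_g$ and $(1-p)L_g$, while those built under the $h$-arc have lengths $pL_h$ and $(1-p)L_h$; here I am using that a new $f$-belos similar to the original has its cusp dividing its base in the ratio $p:(1-p)$, so its two lower arcs are scaled copies of its own upper boundary by the factors $p$ and $1-p$. Since $L_g=pL_f$ and $L_h=(1-p)L_f$ by Property 1, the four lower arcs, read from left to right, have lengths $p^2L_f$, $p(1-p)L_f$, $p(1-p)L_f$, and $(1-p)^2L_f$.

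Next I would identify the middle two arcs. They are the right arc of the $f$-belos under $g$ and the left arc of the $f$-belos under $h$; each is a homothetic copy of $f$ with the same ratio $p(1-p)$, so they are congruent and in particular of equal length $p(1-p)L_f$, which settles the congruence assertion.

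Finally, for the harmonic-mean statement I would apply the Lemma. Place the two original lower-arc lengths on a segment by choosing $A,C,B$ with $|AC|=L_g$ and $|CB|=L_h$, so $|AB|=L_g+L_h=L_f$. The subdivision ratios produced by the two new $f$-beloses are both $p:(1-p)=L_g:L_h=|AC|:|CB|$, so choosing $D\in\overline{AC}$ and $E\in\overline{CB}$ with $|AD|/|DC|=|CE|/|EB|=|AC|/|CB|$ makes $|DC|$ and $|CE|$ equal to the two middle arc lengths. The Lemma then yields
$$|DC|=|CE|=\frac{|AC|\cdot|CB|}{|AB|}=\frac{L_gL_h}{L_g+L_h}=\frac12\cdot\frac{2L_gL_h}{L_g+L_h},$$
and the right-hand side is exactly one half the harmonic mean of $L_g$ and $L_h$, as claimed.

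The step I expect to require the most care is not a computation but the geometric set-up: verifying that a new $f$-belos declared \emph{similar to the original} indeed places its cusp so that the two sub-arcs sit in ratio $p:(1-p)$, and then matching this correctly with the hypotheses of the Lemma. Once that identification is made, the Lemma delivers both the equality of the middle arcs and the harmonic-mean value simultaneously, since $\frac{|AC|\cdot|CB|}{|AB|}$ is precisely half the harmonic mean.
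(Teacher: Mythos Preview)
Your argument is correct and follows essentially the same route as the paper, which simply says that the arc lengths are proportional to their base segments and then invokes the Lemma. You have filled in the details the paper leaves implicit, and you are in fact slightly more careful than the paper in distinguishing \emph{congruence} of the middle arcs (via the common homothety ratio $p(1-p)$) from mere equality of their lengths, which is all the Lemma directly yields.
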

\begin{proof}
It is enough to apply the previous lemma noting that the lengths of the considered arcs are proportional to the length of the horizontal segment that it determines.
\end{proof}

\section{The parallelogram associated to a point}

Let $x_0\in (0,1)$ and consider the point $P_1=(x_0,f(x_0))$. This point lies on the graph of $f$ and hence, by similarity, it has corresponding points $P_2$ and $P_3$ in the graphs of $g$ and $h$, respectively. Namely
$P_2=(px_0,pf(x_0))$ and $P_3=((1-p)x_0+p,(1-p)f(x_0))$ (see Figure \ref{par}). Observe that $\vec{P_2P_1}=\vec{PP_3}=(1-p)(x_0,f(x_0))$. Hence, $P_1P_2PP_3$ is a parallelogram. Since it depends on the choice of $x_0$, we will denote this parallelogram $\mathcal{P}(x_0)$.

\begin{figure}[h]
\label{par}
\begin{center}
\includegraphics{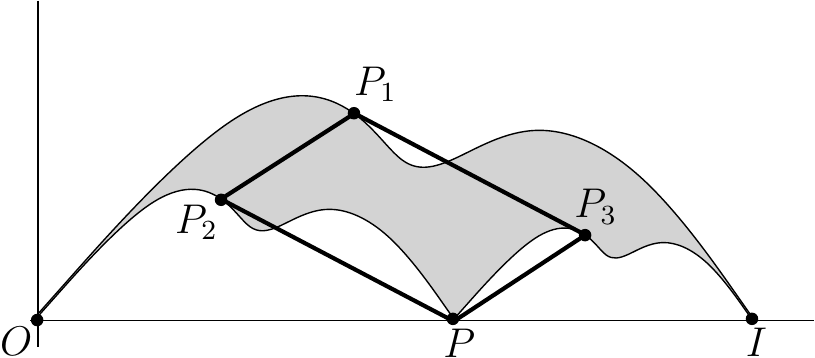}
\end{center}
\caption{The parallelogram associated to the point $P_1$.}
\end{figure}

It is interesting to study when $\mathcal{P}(x_0)$ is a rectangle. This leads to a surprising characterization of the arbelos.

\begin{prop}
Given an $f$-belos, the parallelogram $\mathcal{P}(x_0)$ is a rectangle if and only if $f(x_0)^2=x_0-x_0^2$. Consequently, $\mathcal{P}(x_0)$ is a rectangle for every $x_0\in (0,1)$ if and only if $f$ describes a semicircle (i.e., the figure is an arbelos).
\end{prop}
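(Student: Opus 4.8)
The plan is to use the fact that a parallelogram is a rectangle precisely when two adjacent sides meet at a right angle, a condition I can test via a single dot product. Since the construction already supplies all four vertices explicitly, this reduces to a short computation with no genuine obstacle.

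First I would fix a convenient vertex, say $P_2$, and write down the two side vectors emanating from it. One is already recorded in the text, namely $\vec{P_2P_1}=(1-p)(x_0,f(x_0))$. The other runs to the cusp $P=(p,0)$, giving
$$\vec{P_2P}=P-P_2=(p-px_0,-pf(x_0))=p(1-x_0,-f(x_0)).$$

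Next I would form the dot product of these two adjacent sides:
$$\vec{P_2P_1}\cdot\vec{P_2P}=p(1-p)\bigl[x_0(1-x_0)-f(x_0)^2\bigr].$$
Since $p\in(0,1)$, the factor $p(1-p)$ is nonzero, so the two sides are perpendicular (equivalently, $\mathcal{P}(x_0)$ is a rectangle) if and only if $x_0(1-x_0)-f(x_0)^2=0$, that is $f(x_0)^2=x_0-x_0^2$, which is the first assertion.

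For the final claim I would quantify over $x_0$: the parallelogram is a rectangle for every $x_0\in(0,1)$ exactly when $f(x)^2=x-x^2$ holds identically on the interval. Because $f>0$ on the interior, this forces $f(x)=\sqrt{x-x^2}$, whose graph satisfies $(x-\tfrac12)^2+y^2=\tfrac14$, the upper semicircle of radius $\tfrac12$ centred at $(\tfrac12,0)$; hence the $f$-belos is an arbelos. The only point requiring care is selecting a genuinely adjacent (rather than opposite) pair of sides and tracking the sign of the $f(x_0)^2$ term in the dot product; once that bookkeeping is settled the result is immediate.
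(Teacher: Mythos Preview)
Your proof is correct and follows essentially the same approach as the paper: compute $\vec{P_2P}=p(1-x_0,-f(x_0))$, take its dot product with $\vec{P_2P_1}=(1-p)(x_0,f(x_0))$, and use that $p(1-p)\neq 0$ to reduce the rectangle condition to $f(x_0)^2=x_0-x_0^2$. You spell out the semicircle identification more fully than the paper does, but the argument is the same.
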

\begin{proof}
We have that $\vec{P_2P}=(p-px_0,-pf(x_0))$. Then, $\mathcal{P}(x_0)$ is a rectangle if and only if $\vec{P_2P_1}\bot \vec{P_2P}$; i.e., if and only if
$$0=\vec{P_2P}\cdot\vec{P_2P_1}=p(1-p)[x_0(1-x_0)-f(x_0)^2].$$
\end{proof}

The area of the parallelogram $\mathcal{P}(x_0)$ can be easy computed by:
$${\rm area}(\mathcal{P}(x_0))=||\vec{P_2P_1}\times\vec{P_2P}||=||(0,0,-p(1-p)f(x_0))||=p(1-p)f(x_0).$$
This fact leads to the following property.

\begin{prop}
Given an $f$-belos, let $c\in(0,1)$ be such that $f(c)$ is the mean value of $f$ on $[0,1]$ and let $\mathcal{P}(c)$ be the parallelogram associated to $c$ (see Figure \ref{mean}) . Then:
$${\rm area}(f\textrm{-belos})=2{\rm area}(\mathcal{P}(c)).$$
\end{prop}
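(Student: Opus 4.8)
The plan is to compute both areas explicitly and then compare. First I would write the area of the $f$-belos as the area under the graph of $f$ minus the areas under the two lower arcs $g$ and $h$:
$$\mathrm{area}(f\textrm{-belos}) = \int_0^1 f(x)\,dx - \int_0^p g(x)\,dx - \int_p^1 h(x)\,dx.$$
The heart of the matter is evaluating the two lower integrals by exploiting the similarity relations $g(x)=pf(x/p)$ and $h(x)=(1-p)f\bigl((x-p)/(1-p)\bigr)$. In each case a linear change of variable (namely $u=x/p$ in the first integral and $u=(x-p)/(1-p)$ in the second) reduces the integral to a multiple of $\int_0^1 f$. The point to watch is that each substitution contributes one factor from the vertical scaling already built into the definition of $g$ or $h$, together with a second factor coming from the $dx$ term, so that both dimensions scale and we obtain
$$\int_0^p g(x)\,dx = p^2\int_0^1 f(u)\,du, \qquad \int_p^1 h(x)\,dx = (1-p)^2\int_0^1 f(u)\,du.$$

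Substituting these back and abbreviating $A=\int_0^1 f(x)\,dx$, the area of the $f$-belos becomes $A\bigl[1-p^2-(1-p)^2\bigr]$. The only algebra required is the simplification $1-p^2-(1-p)^2 = 2p(1-p)$, which gives $\mathrm{area}(f\textrm{-belos}) = 2p(1-p)A$.

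It then remains to identify this with $2\,\mathrm{area}(\mathcal{P}(c))$. From the area formula already established for the parallelogram, $\mathrm{area}(\mathcal{P}(c)) = p(1-p)f(c)$. Since $c$ is chosen so that $f(c)$ equals the mean value of $f$ on $[0,1]$, and this mean value is precisely $\frac{1}{1-0}\int_0^1 f = A$ (such a $c$ exists in $(0,1)$ by the Mean Value Theorem for integrals applied to the continuous function $f$), we get $\mathrm{area}(\mathcal{P}(c)) = p(1-p)A$, and comparing the two expressions yields the claimed identity. I do not expect a genuine obstacle here; the only place demanding care is the quadratic scaling of the lower areas (where it is tempting to write $p$ instead of $p^2$, forgetting that both the horizontal and vertical directions are contracted), together with the observation that the mean value of $f$ over the unit interval is simply its integral, which is exactly what ties the local parallelogram area to the global area of the figure.
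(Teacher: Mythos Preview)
Your proof is correct and follows essentially the same route as the paper: compute the area of the $f$-belos as $A - p^2A - (1-p)^2A = 2p(1-p)A$ with $A=\int_0^1 f$, then use $\mathrm{area}(\mathcal{P}(c))=p(1-p)f(c)$ together with $f(c)=A$. The only difference is cosmetic: you spell out the change-of-variable computation for the two lower integrals, whereas the paper simply invokes the similarity of $g$ and $h$ to $f$ to assert the $p^2$ and $(1-p)^2$ scaling of areas.
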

\begin{figure}[h]
\label{mean}
\begin{center}
\includegraphics{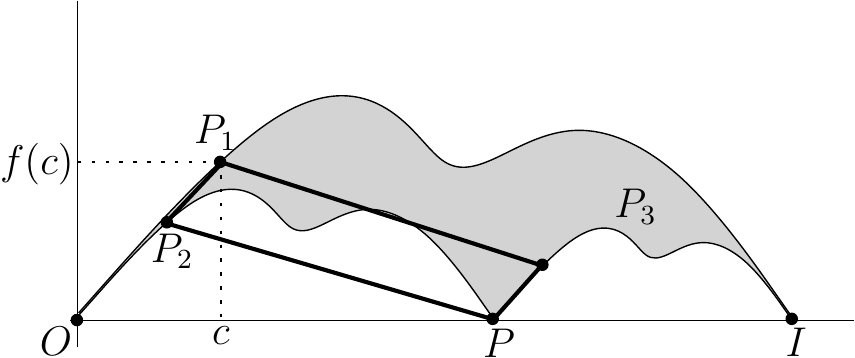}
\end{center}
\caption{The parallelogram associated to the point $(c,f(c))$.}
\end{figure}
\begin{proof}
Let us denote by $A$ the area below the upper arc of the $f$-belos; i.e.:
$$A=\int_0^1 f(x)\ dx.$$
By similarity, the area below the lower arc corresponding to the graph of $g$ is $p^2A$, while the area below the lower arc corresponding to the arc of $h$ is $(1-p)^2A$. Hence the area of the $f$-belos is:
$$A-p^2A-(1-p)^2A=2p(1-p)A.$$

Moreover, the mean value theorem for integration states that there exists $c\in (0,1)$ such that $\displaystyle{A=\int_0^1 f(x)\ dx=f(c)}$. Consequently, the area of the $f$-belos is
$$2p(1-p)f(c),$$
where $f(c)$ is the mean value of $f$ on $[0,1]$.

Since, by the above consideration, $\textrm{area}(\mathcal{P}(c))=p(1-p)f(c)$, the result follows.
\end{proof}

We will see how this property in fact generalizes Property 3 in \cite{SON}.

\begin{rem}
Consider a parbelos; i.e., an $f$-belos with $f(x)=x-x^2$. In this context Property 3 in \cite{SON} states that
$${\rm area}({\rm parbelos})=\frac{4}{3}{\rm area}(\mathcal{P}(1/2)).$$
Let us see how this follows from Property 4 above.

The mean value of $f$ in $(0,1)$ is $1/6$. Consequently, ${\rm area}(\mathcal{P}(c))=\dfrac{p(1-p)}{6}$. On the other hand, while $f(1/2)=1/4$, we have that ${\rm area}(\mathcal{P}(1/2))=\dfrac{p(1-p)}{4}$. Hence:
$${\rm area}({\rm parbelos})=2{\rm area}(\mathcal{P}(c))=\frac{4}{3}{\rm area}(\mathcal{P}(1/2))$$
as claimed.
\end{rem}

\section{The tangent parallelogram}

Throughout this section we will consider an $f$-belos such that $f$ is also differentiable in $x=0$ and $x=1$. Hence, we consider the tangents to $f$ in $x=0$ and in $x=1$ and the tangents in $x=p$ to $g$ and $h$ and a situation like Figure \ref{tan} makes sense.

\begin{figure}[h]
\label{tan}
\begin{center}
\includegraphics{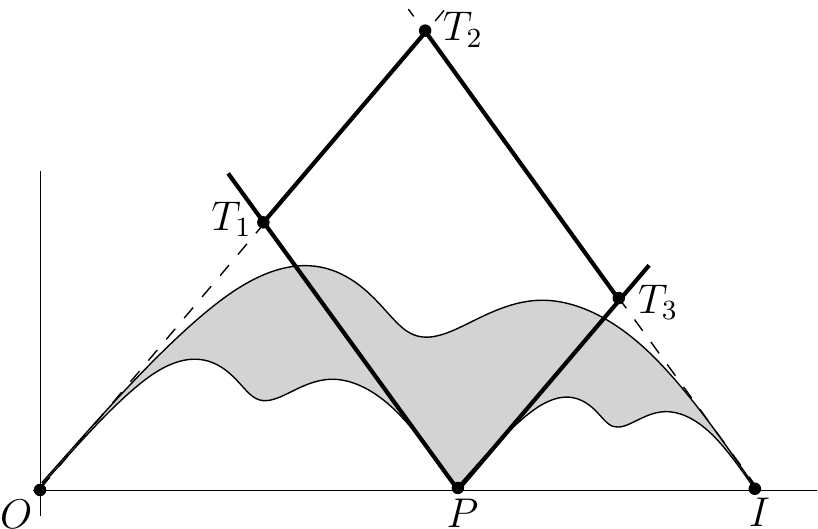}
\end{center}
\caption{The tangent parallelogram of an $f$-belos.}
\end{figure}

The fact that $T_1T_2T_3P$ is a parallelogram (that we will denote by $\mathcal{T}$) follows readily from the similarity of $f$, $g$ and $h$ (observe that $g'(p)=h'(0)$ and that $g'(1)=f'(1)$). In the following property we compute its area.

\begin{prop}
The area of $\mathcal{T}$ is given by:
$${\rm area}(\mathcal{T})=p(1-p)\left|\frac{f'(0)f'(1)}{f'(0)-f'(1)}\right|.$$
\end{prop}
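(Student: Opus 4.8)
The plan is to compute everything in coordinates, exploiting the four tangent lines whose slopes are already pinned down by the similarity of $f$, $g$ and $h$. Since $f(0)=f(1)=0$, the tangent to $f$ at $O$ is the line $y=f'(0)x$ and the tangent at $I$ is $y=f'(1)(x-1)$. Because $g(x)=pf(x/p)$ and $h(x)=(1-p)f((x-p)/(1-p))$, differentiation gives $g'(p)=f'(1)$ and $h'(p)=f'(0)$, so at the cusp $P=(p,0)$ the tangent to $g$ is $y=f'(1)(x-p)$ and the tangent to $h$ is $y=f'(0)(x-p)$. These last two lines meet at $P$, which is therefore one vertex of $\mathcal{T}$; the two sides of $\mathcal{T}$ issuing from $P$ run along them, while the two opposite sides lie on the tangents to $f$ at $O$ and $I$, which are parallel to them (slopes $f'(0)$ and $f'(1)$, respectively).

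Next I would locate the two vertices of $\mathcal{T}$ adjacent to $P$. One is the intersection of the tangent to $g$ at $P$ (slope $f'(1)$) with the tangent to $f$ at $O$ (slope $f'(0)$); the other is the intersection of the tangent to $h$ at $P$ (slope $f'(0)$) with the tangent to $f$ at $I$ (slope $f'(1)$). Each is obtained by solving a $2\times 2$ linear system, and since the paired slopes differ, each system has a unique solution. Subtracting $P$ from each intersection point then yields the two edge vectors $\vec{u}$ and $\vec{v}$ of the parallelogram based at $P$; a short simplification shows that they are scalar multiples of $(1,f'(1))$ and $(1,f'(0))$ with scalars $\dfrac{-pf'(0)}{f'(0)-f'(1)}$ and $\dfrac{(p-1)f'(1)}{f'(0)-f'(1)}$, respectively.

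Finally, the area of the parallelogram is the magnitude of the cross product $\vec{u}\times\vec{v}$. The $z$-component of $(1,f'(1))\times(1,f'(0))$ is $f'(0)-f'(1)$, so multiplying by the two scalars collapses one factor of $f'(0)-f'(1)$ and leaves exactly $p(1-p)\,f'(0)f'(1)/(f'(0)-f'(1))$; taking absolute values produces the claimed formula. The only real obstacle here is bookkeeping: correctly matching each side of $\mathcal{T}$ to the right tangent line and tracking signs through the two intersections. The absolute value in the statement absorbs the sign ambiguity, which merely reflects the intended geometric picture $f'(0)>0>f'(1)$, so no separate case analysis is needed once the edge vectors are in hand.
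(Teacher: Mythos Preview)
Your argument is correct and essentially identical to the paper's: both write down the four tangent lines, intersect adjacent pairs to get the vertices, form two edge vectors of $\mathcal{T}$, and take the magnitude of their cross product. The only cosmetic difference is that you base your edge vectors at $P$ (computing $\vec{PT_1}$ and $\vec{PT_3}$) whereas the paper bases them at $T_1$ (computing $\vec{T_1T_2}$ and $\vec{T_1P}$), which of course yields the same area.
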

\begin{proof}
Since the sides of the parallelogram are given by the equations
$$\overline{T_1T_2}:\ y=f'(0)x,$$
$$\overline{PT_3}:\ y=f'(0)(x-p),$$
$$\overline{T_1P}:\ y=f'(1)(x-p),$$
$$\overline{T_2T_3}:\ y=f'(1)(x-1),$$
it is easy to find the coordinates of $T_i$ for $i=1,2,3$ and obtain $\vec{T_1T_2}=\dfrac{(1-p)f'(1)}{f'(1)-f'(0)}(1,f'(0))$ and $\vec{T_1P}=\dfrac{-pf'(0)}{f'(1)-f'(0)}(1,f'(1))$. With these, we have that
$${\rm area}(\mathcal{T})=||\vec{T_1T_2}\times\vec{T_1P}||=\left|\left|\frac{p(1-p)f'(0)f'(1)}{(f'(1)-f'(0))^2}(0,0,f'(0)-f'(1))\right|\right|$$
and the result follows.
\end{proof}

In the parbelos , $\mathcal{T}$ is in fact a rectangle \cite[Property 4]{SON}. The general situation is as follows.

\begin{prop}
The tangent parallelogram $ \mathcal{T}$ is a rectangle if and only if $f'(0)f'(1)=-1$. In such case we have that
$${\rm area}(\mathcal{T})=p(1-p)\frac{f'(0)}{1+f'(0)^2}.$$
\end{prop}
\begin{proof}
The first part is straightforward recalling that two lines of slopes $m_1$ and $m_2$ are perpendicular if and only if $m_1m_2=-1$. For the second statement it is enough to put $f'(1)=-1/f'(0)$ in the previous property.
\end{proof}

Since we have that $2f'(0)\leq 1+f'(0)^2$, the following property follows (recall the notation from the previous section).

\begin{prop}
Given an $f$-belos, let $f(c)$ be the mean value of $f$ on $[0,1]$. Let also $\mathcal{P}(c)$ be the parallelogram associated to $c$ and $\mathcal{T}$ the tangent parallelogram. If $f'(0)f'(1)=-1$; i.e., if $\mathcal{T}$ is a rectangle, then:
$${\rm area}(\mathcal{P}(c))\geq 2f(c){\rm area}(\mathcal{T});$$
and equality holds if and only if $f'(0)=-f'(1)=1$.
\end{prop}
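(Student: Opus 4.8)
The plan is to reduce the claimed inequality to the elementary estimate already flagged in the text, namely $2f'(0) \leq 1 + f'(0)^2$, by substituting the two explicit area formulas derived earlier. From the computation preceding Property~4 we have $\mathrm{area}(\mathcal{P}(c)) = p(1-p)f(c)$. Moreover, the standing hypothesis $f'(0)f'(1) = -1$ is precisely the condition that makes $\mathcal{T}$ a rectangle, so the second formula of the preceding property applies and gives $\mathrm{area}(\mathcal{T}) = p(1-p)\dfrac{f'(0)}{1+f'(0)^2}$.

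Substituting these into the target inequality $\mathrm{area}(\mathcal{P}(c)) \geq 2f(c)\,\mathrm{area}(\mathcal{T})$ turns it into $p(1-p)f(c) \geq 2f(c)\,p(1-p)\dfrac{f'(0)}{1+f'(0)^2}$. Since $p \in (0,1)$ forces $p(1-p) > 0$, and $c \in (0,1)$ together with the hypothesis $f > 0$ on $(0,1)$ forces $f(c) > 0$, I can cancel the common strictly positive factor $p(1-p)f(c)$. After clearing the positive denominator $1 + f'(0)^2$, the inequality reduces to exactly $1 + f'(0)^2 \geq 2f'(0)$, i.e. $(f'(0)-1)^2 \geq 0$, which is obvious.

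For the equality case I track the last step backwards: equality holds if and only if $(f'(0)-1)^2 = 0$, that is $f'(0) = 1$, and then the hypothesis $f'(0)f'(1) = -1$ pins down $f'(1) = -1$, recovering the stated condition $f'(0) = -f'(1) = 1$. I expect no real obstacle here; the only points deserving a line of justification are verifying that the cancelled factors $p(1-p)$ and $f(c)$ are strictly positive (so that dividing preserves the inequality) and confirming that the single condition $f'(0)=1$ propagates through $f'(0)f'(1)=-1$ to give the symmetric equality condition.
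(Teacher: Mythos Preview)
Your proof is correct and follows essentially the same route as the paper: both arguments combine the formulas $\mathrm{area}(\mathcal{P}(c))=p(1-p)f(c)$ and $\mathrm{area}(\mathcal{T})=p(1-p)\dfrac{f'(0)}{1+f'(0)^2}$ and reduce the claim to the elementary inequality $1+f'(0)^2\geq 2f'(0)$. Your write-up is in fact slightly more explicit than the paper's in checking that the cancelled factors $p(1-p)$ and $f(c)$ are strictly positive and in deducing $f'(1)=-1$ from $f'(0)=1$ via the hypothesis.
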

\begin{proof}
Properties 4 and 6 imply that
$${\rm area}(\mathcal{P}(c))=f(c)p(1-p)=f(c)\frac{{\rm area}(\mathcal{T})}{\frac{f'(0)}{1+f'(0)^2}}\geq 2f(c){\rm area}(\mathcal{T}).$$
And equality holds if and only if $2f'(0)=1+f'(0)^2$; i.e., if and only if $f'(0)=1$.
\end{proof}

\begin{rem}
In the parbelos $f(x)=x-x^2$, so we have $f'(0)=1=-f'(1)$ and equality holds in Property 7. Moreover (recall the remark after Property 4), $f(c)=1/6$. Hence:
$${\rm area}(\mathcal{T})=\frac{{\rm area}(\mathcal{P}(c))}{2f(c)}=\frac{3}{2}{\rm area}(\rm parbelos),$$
as was already proved in \cite[Property 4]{SON}.
\end{rem}

Now, if $f'(0)f'(1)=-1$, $\mathcal{T}$ is a rectangle, so it makes sense to consider its circumcircle $\Gamma$. In \cite[Property 6]{SON} it was proved that, in the parbelos case (i.e., when $f(x)=x-x^2$) the circumcircle of the tangent rectangle passes through the focus of the upper parabola (i.e., through the point $(1/2,0)$). This property can be generalized in the following way.

\begin{prop}
Given an $f$-belos such that its tangent parallelogram $\mathcal{T}$ is a rectangle; i.e., such that $f'(0)f'(1)=-1$, the circumcircle $\Gamma$ of $\mathcal{T}$ intersects the axis $OX$ at the point $(p,0)$ and $\displaystyle{\left(\frac{1}{1+f'(0)^2},0\right)}$ (see Figure \ref{circ}). Consequently:
\begin{itemize}
\item $\Gamma$ is tangent to $OX$ if and only if $\displaystyle{p=\frac{1}{1+f'(0)^2}}$.
\item $\Gamma$ intersects $OX$ at $x=1/2$ if and only if $f'(0)=1$.
\end{itemize}
\end{prop}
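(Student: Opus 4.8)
The plan is to exploit the fact that, since $\mathcal{T}=T_1T_2T_3P$ is a rectangle, its diagonal $\overline{PT_2}$ is a diameter of the circumcircle $\Gamma$. The vertex $P=(p,0)$ already lies on $OX$, so one of the two intersection points of $\Gamma$ with the axis is immediately $(p,0)$, with no computation required. This accounts for the first claimed point.

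For the second point I would avoid writing down the center and radius of $\Gamma$ explicitly, and instead use the converse of Thales' theorem. Let $Q$ be the foot of the perpendicular dropped from $T_2$ onto $OX$, so that $Q=(x_{T_2},0)$ where $x_{T_2}$ is the abscissa of $T_2$. Then $\vec{QP}$ is horizontal and $\vec{QT_2}$ is vertical, hence $\angle PQT_2$ is a right angle; since $\overline{PT_2}$ is a diameter of $\Gamma$, the converse of Thales' theorem forces $Q\in\Gamma$. Thus the second intersection of $\Gamma$ with $OX$ is exactly $Q=(x_{T_2},0)$.

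It then remains to compute $x_{T_2}$. The point $T_2$ is the intersection of the lines $\overline{T_1T_2}:\ y=f'(0)x$ and $\overline{T_2T_3}:\ y=f'(1)(x-1)$ from the proof of Property 5, which gives $x_{T_2}=\dfrac{f'(1)}{f'(1)-f'(0)}$. Substituting the rectangle condition $f'(1)=-1/f'(0)$ and simplifying turns this into $\dfrac{1}{1+f'(0)^2}$, as claimed. Equivalently, one could locate the center of $\Gamma$ as the midpoint of the diagonal $\overline{PT_2}$ and use the symmetry of the circle about the vertical line through its center; both routes are short, and the only real work is this algebraic simplification.

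Finally, the two itemized consequences follow at once. Tangency to $OX$ means the two intersection points coincide, i.e. $p=\dfrac{1}{1+f'(0)^2}$. And the second point has abscissa $1/2$ precisely when $1+f'(0)^2=2$, i.e. $f'(0)^2=1$; here the geometry of the $f$-belos forces $f'(0)>0$ (the graph of $f$ rises from the origin, since $f(0)=0$ and $f>0$ on $(0,1)$), so this reduces to $f'(0)=1$. I expect no genuine obstacle here: the one point to be careful about is discarding the spurious root $f'(0)=-1$ by invoking the sign of $f'(0)$, and making sure that the diagonal whose endpoint lies on the axis is $\overline{PT_2}$ rather than $\overline{T_1T_3}$.
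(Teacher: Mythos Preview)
Your argument is correct and in fact cleaner than the paper's. The paper locates the center of $\Gamma$ as the midpoint of the diagonal $\overline{T_1T_3}$, then writes the equal--distance condition $|C_\Gamma-(\alpha,0)|=|C_\Gamma-(p,0)|$ and solves the resulting equation for $\alpha$, cancelling the factor $\alpha-p$ to obtain $\alpha=\dfrac{1}{1+f'(0)^2}$. You instead use the \emph{other} diagonal $\overline{PT_2}$ as a diameter and invoke the converse of Thales' theorem, which immediately identifies the second intersection with $OX$ as the foot of the perpendicular from $T_2$; the only computation left is finding $x_{T_2}$, which you do correctly. Your route avoids writing out the center coordinates and the distance equation, at the cost of having to justify that $\overline{PT_2}$ (and not $\overline{T_1T_3}$) is the diagonal with an endpoint on $OX$ --- a point you explicitly flag. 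Your handling of the two corollaries is also fine: in particular the sign argument $f'(0)\geq 0$ (from $f(0)=0$ and $f>0$ on $(0,1)$) together with $f'(0)f'(1)=-1$ legitimately discards the root $f'(0)=-1$, which the paper leaves implicit.
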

\begin{proof}
The center of this circle is the midpoint of $\overline{T_1T_3}$; i.e.:
$$C_{\Gamma}=\left(\frac{p+1+pf'(0)^2}{2(1+f'(0)^2)},\frac{f'(0)}{2(1+f'(0)^2)}\right).$$
This circle clearly intersects $OX$ at $P$ but, of course, it will intersect the axis $OX$ in another point (unless $\Gamma$ is tangent to $OX$). Hence we are looking for $p\neq\alpha\in(0,1)$ such that $(\alpha,0)\in \Gamma$. This condition leads to:
$$\left(\frac{p+1+pf'(0)^2}{2(1+f'(0))^2}-\alpha\right)^2+\left(\frac{f'(0)}{2(1+f'(0))^2}\right)^2=\left(\frac{p+1+pf'(0)^2}{2(1+f'(0))^2}-p\right)^2+\left(\frac{f'(0)}{2(1+f'(0))^2}\right)^2.$$
Consequently
$$(\alpha^2-p^2)-(\alpha-p)\frac{p+1+pf'(0)^2}{2(1+f'(0)^2)}=0$$
and, since $p\neq\alpha$,
$$\alpha=\frac{p+1+pf'(0)^2}{2(1+f'(0)^2)}-p=\frac{1}{1+f'(0)^2}$$
as claimed.
\end{proof}

\begin{figure}[h]
\label{circ}
\begin{center}
\includegraphics{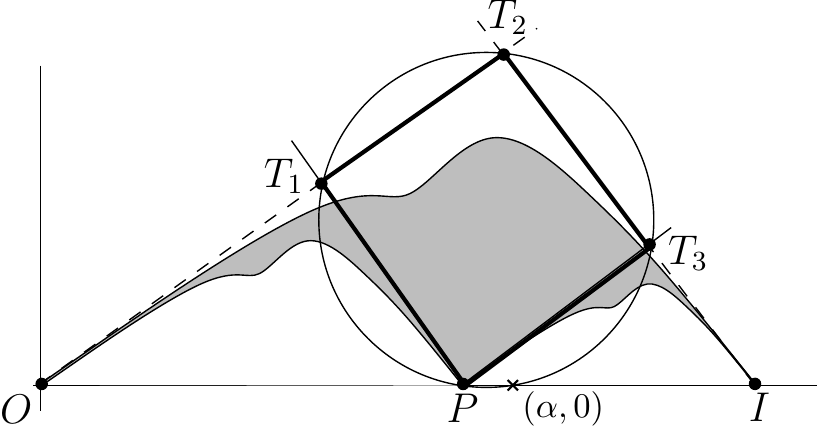}
\end{center}
\caption{The circumcircle of $\mathcal{T}$.}
\end{figure}

Now, we turn again to the general case, when $\mathcal{T}$ is ``only'' a parallelogram. The equation of the line passing through $T_1$ and $T_3$ is:
$$\overline{T_1T_3}:\ [(1-2p)f'(0)f'(1)]x+[pf'(0)-(1-p)f'(1)]y+p^2f'(0)f'(1)=0.$$
We are interested in studying when this line is tangent to the graph of $f$ at $(p,f(p))$. This interest is motivated by \cite[Property 5]{SON}, where it was proved that, in a parbelos, the diagonal of the tangent rectangle opposite the cust is tangent to the upper parabola at the point $(p,f(p))$. In fact we will see how this property characterizes parbeloses. 

\begin{prop}
The point $(p,f(p))$ lies on $\overline{T_1T_3}$ (see Figure \ref{tan}) if and only if
$$f(p)=\frac{p(p-1)f'(0)f'(1)}{pf'(0)-(1-p)f'(1)}.$$
In addition, the line $\overline{T_1T_3}$ is tangent to the graph of $f$ at the point $(p,f(p))$ if and only if 
$$f'(p)=\frac{(2p-1)f'(0)f'(1)}{pf'(0)-(1-p)f'(1)}.$$
\end{prop}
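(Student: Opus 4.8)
The plan is to prove both statements by direct substitution into the equation of $\overline{T_1T_3}$ displayed just above the proposition, which is already in hand. For the first claim I would impose that $(p,f(p))$ satisfy the line equation, i.e. substitute $x=p$ and $y=f(p)$ into
$$[(1-2p)f'(0)f'(1)]x+[pf'(0)-(1-p)f'(1)]y+p^2f'(0)f'(1)=0.$$
The only computation required is to combine the two terms not involving $f(p)$, namely the coefficient of $x$ multiplied by $p$ together with the constant term; these collapse because
$$(1-2p)f'(0)f'(1)\,p+p^2f'(0)f'(1)=p(1-p)f'(0)f'(1).$$
The resulting equation is linear in $f(p)$, and solving it (moving the coefficient $pf'(0)-(1-p)f'(1)$ to the denominator and using $-(1-p)=p-1$) yields exactly the stated formula for $f(p)$.

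For the second claim I would read off the slope of $\overline{T_1T_3}$ directly from its equation: writing it as $Ax+By+C=0$, the slope is $-A/B$, which gives
$$m=\frac{(2p-1)f'(0)f'(1)}{pf'(0)-(1-p)f'(1)}.$$
Now the line $\overline{T_1T_3}$ is tangent to the graph of $f$ at the point $(p,f(p))$ precisely when two conditions hold simultaneously: the point lies on the line (the condition characterized in the first part), and the slope of the line agrees with the derivative $f'(p)$. Since the phrase ``in addition'' in the statement signals that the incidence condition is being assumed, only the slope-matching condition remains, so tangency is equivalent to $f'(p)=m$, which is the asserted formula.

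I expect no genuine obstacle here: both parts reduce to elementary substitution and to reading the slope off a linear equation. The one point deserving care is the algebraic simplification in the first part, namely verifying that the $x$-term and the constant term combine to $p(1-p)f'(0)f'(1)$; and, conceptually, the logical structure of the second part, where ``tangent at the point $(p,f(p))$'' must be understood as the incidence condition of the first part \emph{together with} the matching of slopes, rather than the slope condition in isolation.
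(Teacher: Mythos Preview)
Your proposal is correct and follows exactly the same approach as the paper: substitute $(p,f(p))$ into the displayed equation of $\overline{T_1T_3}$ for the first part, and equate $f'(p)$ with the slope $-A/B$ read off from that equation for the second. Your write-up simply fills in the algebraic details that the paper omits.
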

\begin{proof}
For the first part it is enough to substitute $(p,f(p))$ in the equation of $\overline{T_1T_3}$. For the second part, the slope of $\overline{T_1T_3}$ must coincide with $f'(p)$.
\end{proof}

\begin{figure}[h]
\label{tan}
\begin{center}
\includegraphics[width=\linewidth]{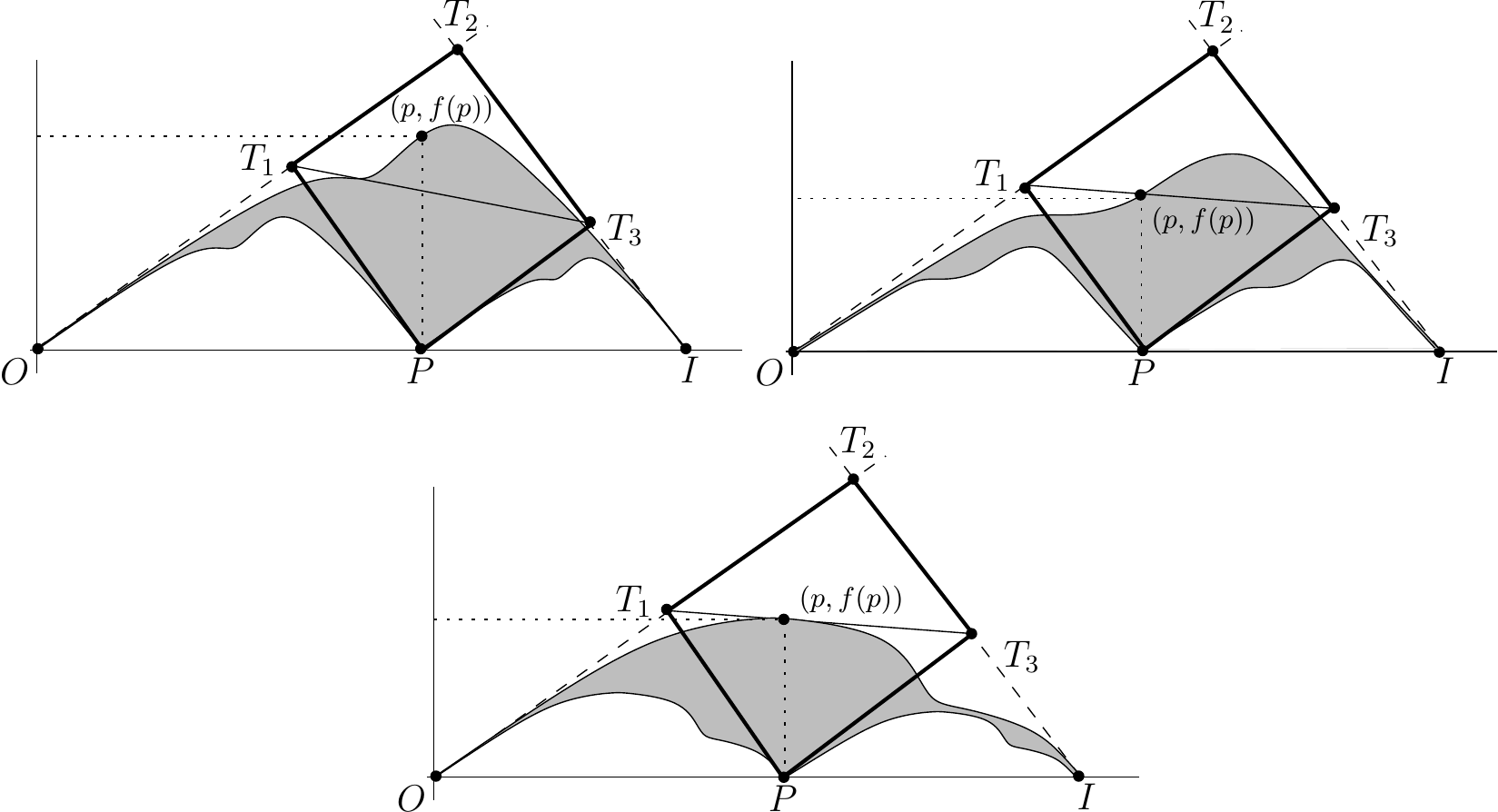}
\end{center}
\caption{Different relative positions of $f$ and $\overline{T_1T_3}$ at $(p,f(p))$.}
\end{figure}

We will close the paper with a property that surprisingly characterizes the parbelos among all possible $f$-beloses. Parbeloses are the only $f$-beloses such that the diagonal of $\mathcal{T}$ opposite to the cusp is tangent to $f$ at $(p,f(p))$ for every $p\in(0,1)$.

\begin{prop}
Given an $f$-belos, let $\mathcal{T}$ be its tangent parallelogram. Then the diagonal of $\mathcal{T}$ opposite to the cusp is tangent to $f$ at $(p,f(p))$ for every $p\in(0,1)$ if and only if $f$ is a parabola $f(x)=k(x-x^2)$. Moreover, in this case $\mathcal{T}$ is a rectangle if and only if $k=1$.
\end{prop}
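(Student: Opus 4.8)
The plan is to read off the two tangency conditions from Property 8 and combine them cleverly. Tangency of the diagonal $\overline{T_1T_3}$ to $f$ at $(p,f(p))$ holds exactly when both
$$f(p)=\frac{p(p-1)f'(0)f'(1)}{pf'(0)-(1-p)f'(1)}\qquad\text{and}\qquad f'(p)=\frac{(2p-1)f'(0)f'(1)}{pf'(0)-(1-p)f'(1)}$$
are satisfied. The crucial observation is that $f'(0)$ and $f'(1)$ are fixed constants attached to $f$, while $p$ is free; so the hypothesis ``for every $p\in(0,1)$'' means these two identities hold simultaneously for all $p$ in the open interval.

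First I would note that, because $f>0$ on $(0,1)$, the first identity forces $f'(0)f'(1)\neq 0$ (otherwise its right-hand side, and hence $f(p)$, would vanish). This makes it legitimate to divide the second identity by the first. The division cancels both the common factor $f'(0)f'(1)$ and the common denominator, collapsing all the awkward dependence on $f'(0),f'(1)$ into the single separable equation
$$\frac{f'(p)}{f(p)}=\frac{2p-1}{p(p-1)}=\frac{2p-1}{p^2-p},$$
valid for every $p\in(0,1)$. Since $p-p^2>0$ there, the right-hand side is precisely $\frac{d}{dp}\ln(p-p^2)$, so integrating yields $\ln f(p)=\ln(p-p^2)+C$ and therefore $f(p)=k(p-p^2)$ with $k=e^{C}>0$. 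This settles the forward direction: tangency at $(p,f(p))$ for every $p$ forces $f$ to be a parabola of the stated form.

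For the converse I would simply substitute $f(x)=k(x-x^2)$, for which $f'(0)=k$ and $f'(1)=-k$, into the two expressions of Property 8 and check that they reduce to $f(p)=k(p-p^2)$ and $f'(p)=k(1-2p)$, confirming tangency for all $p$. The final assertion about $\mathcal{T}$ then follows immediately from Property 6: the tangent parallelogram is a rectangle if and only if $f'(0)f'(1)=-1$, and with $f'(0)=k$, $f'(1)=-k$ this reads $-k^2=-1$, i.e.\ $k=1$ since $k>0$.

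The one step deserving genuine care --- and the only place anything could go wrong --- is the passage from the two rational identities to the clean logarithmic-derivative equation: I must confirm that the division is permissible on all of $(0,1)$ and that the subsequent integration is valid, both of which rest on the single fact that $f>0$ (hence $f'(0)f'(1)\neq 0$ and $p-p^2>0$) throughout the interval. The pleasant surprise is that forming the ratio annihilates every unknown constant at once, so the potentially messy rational behaviour in $f'(0)$ and $f'(1)$ never has to be confronted head-on.
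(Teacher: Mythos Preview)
Your argument is correct and is genuinely different from the paper's. The paper proceeds algebraically: it takes the first identity as a \emph{formula} for $f$, differentiates that rational expression, sets the result equal to the second identity, and after clearing denominators obtains $(f'(0)+f'(1))(x-x^2)=0$, whence $f'(0)=-f'(1)$ and the formula collapses to $f(x)=f'(0)(x-x^2)$. Your route---dividing the two identities to obtain the separable ODE $f'/f=(2p-1)/(p^2-p)$ and integrating---is shorter and sidesteps the somewhat unpleasant differentiation of a rational function in $p$; the price is that the ratio alone does not recover both original identities, so the explicit converse check you include is essential (the paper, by contrast, never leaves the system of two equations and so gets the converse for free). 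One small slip: the tangency conditions you invoke are stated in the paper as Property~9, not Property~8; your use of Property~6 for the rectangle criterion is correctly numbered.
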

\begin{proof}
The first part of the previous property leads to
$$f(x)=\frac{x(x-1)f'(0)f'(1)}{xf'(0)-(1-x)f'(1)}$$
for every $x\in[0,1]$. In this case we have that
$$f'(x)=\frac{f'(0)f'(1)[x^2f'(0)+(1-x)^2f'(1)]}{[xf'(0)-(1-x)f'(1)]^2}.$$
Hence, if $f$ is tangent to $\overline{T_1T_3}$ at $(p,f(p))$ for every $p\in(0,1)$, the second part of the previous property implies that:
$$f'(x)=\frac{f'(0)f'(1)[x^2f'(0)+(1-x)^2f'(1)]}{[xf'(0)-(1-x)f'(1)]^2}=\frac{(2x-1)f'(0)f'(1)}{xf'(0)-(1-x)f'(1)}$$
for every $x\in (0,1)$.
Some computations lead to
$$(f'(0)+f'(1))(x-x^2)=0$$
for every $x\in[0,1]$ and hence $f'(0)=-f'(1)$ so
$$f(x)=\frac{-x(x-1)f'(0)^2}{xf'(0)+(1-x)f'(0)}=f'(0)(x-x^2),$$
and the result follows.
\end{proof}

\section*{Acknowledgments}

I want to thank Jonathan Sondow for his many observations that have improved the paper.

\bibliography{./refarb}

\begin{thebibliography}{1}

\bibitem{ARC}
Archimedes.
\newblock {\em The works of {A}rchimedes}.
\newblock Dover Publications Inc., Mineola, NY, 2002.
\newblock Reprint of the 1897 edition and the 1912 supplement, Edited by T. L.
  Heath.

\bibitem{BAL}
Yuri Balashov.
\newblock Should plato's line be divided in the mean and extreme ratio?
\newblock {\em Ancient Philosophy}, 14:283--295, 1994.

\bibitem{BOA}
Harold~P. Boas.
\newblock Reflections on the arbelos.
\newblock {\em Amer. Math. Monthly}, 113(3):236--249, 2006.

\bibitem{SON}
Jonathan Sondow.
\newblock The parbelos, a parabolic analog of the arbelos.
\newblock {\em Amer. Math. Monthly}, to appear.(on-line at arXiv:1210.2279
  [math.HO]).

\end{thebibliography}
 \bibliographystyle{plain}

\end{document}